\newcommand\ds{\displaystyle}
\newcommand\vect{\textnormal{Vect}}
\newtheorem{Rk}{Remark}
\newtheorem{Pp}{Property}
\newtheorem{Lem}{Lemma}
\newtheorem{definition}{Definition}
\newenvironment{proof}{{\it Proof :~}}{\hfill$\square$\\}
\begin{document}
\begin{frontmatter}

\title{
Stability analysis of a system coupled to a heat equation.\thanksref{footnoteinfo}
} 

\today
\thanks[footnoteinfo]{This work is supported by the ANR project SCIDiS contract number 15-CE23-0014.}

\author[First]{Lucie Baudouin}, 
\author[First]{Alexandre Seuret},
\author[First]{Frederic Gouaisbaut} 

\address[First]{LAAS-CNRS, Universit\'e de Toulouse, CNRS, UPS, Toulouse, France.
}

\begin{abstract}            
As a first approach to the study of systems coupling finite and infinite dimensional natures, this article addresses the stability of
a system of ordinary differential equations coupled with a classic heat equation using a Lyapunov functional technique. 
Inspired from recent developments in the area of time delay systems, a new methodology to study the stability of such a class of distributed parameter systems is presented here. 
The idea is to use a polynomial approximation of the infinite dimensional state of the heat equation in order to build an enriched energy functional.
A well known efficient integral inequality (Bessel inequality) will allow to obtain stability conditions expressed in terms of linear matrix inequalities. 
We will eventually test our approach on academic examples in order to illustrate the efficiency of our theoretical results. 
\end{abstract}
\begin{keyword}
heat equation, Lyapunov functional, Bessel inequality, polynomial approximation.
\end{keyword}

\end{frontmatter}

 \section{Introduction }\label{intro}
 
 Coupling  a classical finite dimensional system to a partial differential equation (PDE) presents not only interesting theoretical challenges but can also formalize various applicative situations.
Effectively, as the solution of the PDE is a state belonging to an infinite dimensional functional space, its coupling with a finite dimensional system brings naturally new difficulties in stability study and/or control of the coupled system. 
\newline
We can also list several specific situations worth being modeled by this king of heterogeneous coupled system,  see e.g. \cite{HAK-Autom16} or \cite{Aamo-TAC13}. For example, the finite dimensional systems could represent a dynamic controller for a system modeled by a PDE (see \cite{andrea1994}, \cite{krstic2009delay} and references therein). Instead, a system of ordinary differential equations (ODEs) can model a component coupled to a phenomenon described by PDEs as in \cite{DTV-SCL14}. Conversely, the PDE can model an  actuator or sensor's behavior and the goal could be to study the stabilization of a finite dimensional system in spite of the introduction of the actuator/sensor's dynamics.
\newline
Actually, the last decade has seen the emergence of number of papers concerning the stability or control of this type of coupled systems (as in \cite{SK-JFI10}, \cite{krstic2009delay}, see also  references therein). 
When considering such a coupling of equations of different nature, it is important to highlight that the notion of stability regarding PDEs is not as generic as for classical systems of ODEs. It depends specifically on the type of PDE under consideration, on the functional space where the solution belongs and the choice of an appropriate norm (in other words the definition of the energy of the infinite dimensional state), see \cite{TX-SCL11}. Of course, the type of interconnection between the ODE and the PDE and the boundary conditions of the PDE also plays a role (see for instance the reference book \cite{CurtainBook} or \cite{Bastin-Coron-Book} for a rather complete exposition of the stability and stabilization problem). 
\newline
One classical way to study the stability of such a coupled system relies on discretization techniques leading to some finite dimensional systems to be studied. The question of convergence (from the discretized to the corresponding continous system) of the results is then quite natural and may be complicated to deal with (see \cite{Morris1994}). That's the reason why several researches have turned to direct approaches: the objective is to determine a Lyapunov functional for the overall system directly,  without going through a discretization scheme. This gave rise to many interesting methodologies. Hence, a first one relies on the semi-group theory to model the overall system and it may lead, as in \cite{fridman2014introduction}, to some Linear Operator Inequalities to be solved numerically. Unfortunately, this approach remains quite limited (see \cite{fridman2009exponential}) and works finally only for small dimensional ODE systems since no numerical tools are available to solve these Linear Operator Inequalities. Furthermore, the generic semi-group approach generally fails to develop a constructive approach for the design of Lyapunov functionals.

Another possible approach considers the design of a Lyapunov functional which is usually based on the sum of a classical Lyapunov functional identified for each part of the system under consideration. When dealing with the PDE of a coupled system, its Lyapunov functional is actually the ``energy" of the PDE (see \cite{Prieur2012}, \cite{Papachristodoulou2006}). In the book \cite{krstic2009delay}, chapter 15, the control of a finite dimensional system connected to an actuator/sensor modeled by a heat equation with Neumann and Dirichlet boundary conditions is considered. The author adopts the backstepping method employed originally in the case of the transport (or delay) equation. The resulting feedback system is equivalent to a finite dimensional exponentially stable system cascaded with a heat condition. The choice of an appropriate Lyapunov function as a sum of the energy of the heat equation and a classical quadratic function for the finite dimensional system allows to prove the exponential stability of the overall system. 
Recently, several approaches based on an optimisation procedure have been developped. Starting from a semi-group modeling of the PDEs, the authors of  \cite{Peet-TAC17} construct a very general Lyapunov functional which parameters are optimised via a sum of square procedure (see also \cite{AVP-Autom16}).  This methodology is then applied to the controller or observer design. 
\newline
Considering in this article a situation where a heat type phenomenon is to be controlled at its boundary by a finite dimensional dynamical controller, we are interested with the efficient and numerically tractable stability analysis of the closed-loop system. More precisely, we will not work on the control design, but on the stability study of a system coupling a one-dimensional heat equation and an ODE.
The practical interest of such a model is reflected for example in the study of temperature control systems using a thermocouple as a heat sensor.
\newline
Our task in this article will be to study a finite dimensional ODE system coupled with a heat equation in 1-d in space, where the interconnection is performed through the boundary of the space domain. 
We aim at proving exponential stability results, meaning that starting from an arbitrary initial condition, the whole system's solution follows a time  trajectory that exponentially converges in spatial norm to an equilibrium state. Nevertheless, the stability analysis is challenging since it depends strongly on the norm chosen to measure the deviation with respect to the steady state (for the PDE part specifically). 
But above all, our goal is to provide practical stability tests for the whole system that can take into account both the finite dimensional state and its interplay with the infinite dimensional state of the PDE. 
It will be performed thanks to the construction of a general Lyapunov functional based on the weighed classical energy of the full system enriched by a quadratic term built on a truncation of the distributed state. 
To this end, we will use the projection of the state over a set of polynomials and take advantage of this approximation to provide tractable stability conditions for the whole coupled system. A first step of our study, using only the mean value of the PDE state as a rough approximation, was presented in the conference paper \cite{BSG-IFAC17}. Notice that the tools of this approach have also been used in \cite{barreau2017lyapunov} in order to study the stability of a coupling between an ODE and a hyperbolic equation.

{\em Notation.}  
As usual, $\mathbb N$ denote the sets of positive integers,
$\mathbb R^+$, $\mathbb R^{n}$, and $\mathbb R^{n\times m}$ the positive
reals, $n$-dimensional vectors and $n\times m$ matrices ; the Euclidean norm writes $|\cdot|$. For any matrix $P$ in $\mathbb R^{n\times n}$, we denote $\mbox{He}(P)=P+P^\top $  (where $P^\top$ is the transpose matrix) and $P\succ0$ means that $P$ is symmetric positive definite, \textit{ie} $P\in\mathbb S^{n}_+$. For a partitioned matrix, the symbol ${\ast}$ stands for symmetric blocks and $I$ is the identity, $0$ the zero matrix.  
The partial derivative on a function $u$ with respect to $x$ is denoted $\partial_x u = \frac{\partial u}{\partial x}$ (while the time derivative of $X$ is $\dot X = \frac{dX}{dt}$). Finally, using $L^2(0,1)$ for the Hilbert space of square integrable functions, one writes $\|z\|^2 = \int_{0}^1 |z(x)|^2  \,dx = \left<z,z\right>$, and we also define the Sobolev spaces $H^1(0,1) = \{ z\in L^2(0,1), \partial_x z \in L^2(0,1) \}$ and its norm by $\|z\|^2_{H^1(0,1)} = \|z\|^2 + \|\partial_x z\|^2$, $H^2(0,1) = \{ z\in L^2(0,1), \partial_x z \in L^2(0,1), \partial_{xx} z \in L^2(0,1) \}$ and its norm by $\|z\|^2_{H^2(0,1)} = \|z\|^2 + \|\partial_x z\|^2+\|\partial_{xx} z\|^2$.

{\em Outline.}
A thorough description of the system under study will be given in Section~\ref{Problem}. Then, Section~\ref{Tools} will detail the main tools of the proof of the stability result presented in Section~\ref{Stability}. An illustrating example of this theoretical result will conclude in Section~\ref{Numerics}.

 \section{Problem Description}\label{Problem}
 \subsection{A coupled system}\label{coupledsystem}
Consider the coupling of a finite dimensional system in the variable $X \in \mathbb R^n$ with a heat partial differential equation in the scalar variable $u$, in the following  way:
\begin{equation}\label{ODEheat}
		\left\{\begin{array}{ll}
		\dot X (t) = A X(t) + Bu(1,t) & \qquad t >0,\\
		\partial_t u (x,t) = \gamma \partial_{xx} u(x,t) ,& \qquad x \in (0,1), t >0,\\
		u (0,t) = C X(t),&\qquad t >0\\
		\partial_x u (1,t) = 0,&\qquad t >0.
		\end{array}\right.
\end{equation}
 
 The state vector of the system is the pair $(X(t),u(x,t)) \in \mathbb R^n \times\mathbb R $ and it satisfies the compatible initial datum 
$(X(0),u(x,0)) = (X^0,u^0(x))$ for $x\in(0,1)$. The thermal diffusivity is denoted $\gamma \in\mathbb R_+$ and the matrix $A \in\mathbb R^{n\times n}$, the vectors $B\in \mathbb R^{n\times 1}$ and $C \in \mathbb R^{1\times n}$ are constant.

\begin{Rk}
One can imagine different situations that can be translated into the coupled system \eqref{ODEheat}. As a toy problem of more complicated situations, the system we study already allows to face several difficulties inherent to a situation mixing finite and infinite dimensional states. Nevertheless, we can describe two more physical situations that could be simplified as our toy problem : either a finite dimensional system confronted with a thermocouple sensor, or a heat device connected to a  finite dimension dynamic controller. Anyway, these are only mere ideas that could link ODEs with a heat PDE and we remain here at a simplified but still challenging level.
\end{Rk}

 \subsection{Existence and regularity of the solutions}\label{coupledsystem}

Before anything else, one should know that the partial differential equation $\partial_t u - \gamma \partial_{xx}  u = 0$ in \eqref{ODEheat} of unknown $u=u(x,t)$ is a classic heat PDE and if the boundary data are of Dirichlet homogeneous type (\textit{i.e.} $u (0,t) = u(1,t) = 0$) and the initial datum $u(\cdot,0) = u^0$ belongs to $H_0^1(0,1)$, it has a unique solution $u$ satisfying (see \cite{Brezis})
$$
\begin{aligned}
	&u \in C([0,+\infty[;H_0^1(0,1))\cap L^2(0, + \infty;H^2(0,1)), \\
&\partial_t u \in L^2(0,+\infty ;L^2(0,1 )).
\end{aligned}
$$
In this article, we are dealing with System \eqref{ODEheat}, coupling ODEs with a heat equation through its boundary data, system for which we should start with the existence and regularity of the solution $(X,u)$. A Galerkin method (see e.g. \cite{EvansBook}) is the key of the proof of such a result, stated in the following lemma.
\begin{Lem}
Assuming that the initial data $(X^0, u^0)$ belong to $\mathbb R^n \times L^2(0,1)$, system \eqref{ODEheat} admits a unique solution $(X,u)$ such that 
\begin{multline*}
X \in C([0,+ \infty[;\mathbb R^n) \\
u\in C([0,+\infty[;H^1(0,1))\cap L^2(0, + \infty;H^2(0,1))  \\
\textnormal{ and } \partial_t u \in L^2(0,+\infty ;L^2(0,1 )).
\end{multline*}
 \end{Lem}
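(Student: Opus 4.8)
The plan is to follow the Galerkin strategy announced by the authors, the only real complication being the \emph{dynamic, non-homogeneous} boundary coupling $u(0,t)=CX(t)$. First I would get rid of this inhomogeneity by a lifting: choose a fixed smooth profile $\phi\in C^\infty([0,1])$ with $\phi(0)=1$ and $\phi'(1)=0$, and set $v(x,t)=u(x,t)-\phi(x)\,CX(t)$. Then $v$ carries the homogeneous boundary conditions $v(0,t)=0$ and $\partial_x v(1,t)=0$, while the equations become $\partial_t v = \gamma\partial_{xx}v + \gamma\phi'' CX - \phi C\dot X$ coupled with $\dot X = AX + B\big(v(1,t)+\phi(1)CX\big)$. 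This is a linear PDE--ODE system with homogeneous boundary data and forcing terms that are affine in the unknowns, which is the clean setting where Galerkin applies.

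Next I would fix the Hilbert basis $(e_k)_{k\ge1}$ of $L^2(0,1)$ made of the eigenfunctions of $-\partial_{xx}$ for the mixed Dirichlet/Neumann conditions, namely $e_k(x)=\sqrt{2}\sin\big((k-\tfrac12)\pi x\big)$, which satisfy $e_k(0)=0$ and $e_k'(1)=0$, precisely the boundary conditions carried by $v$. Looking for approximate solutions $v_N(x,t)=\sum_{k=1}^N g_k^N(t)e_k(x)$ together with $X_N(t)$, and projecting the $v$-equation onto $\mathrm{span}(e_1,\dots,e_N)$ while keeping the ODE, one obtains a finite-dimensional \emph{linear} ODE system in the unknowns $(X_N,g_1^N,\dots,g_N^N)$. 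By the Cauchy--Lipschitz theorem this system has a unique global solution on $[0,+\infty[$, a linear system being unable to blow up in finite time.

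The heart of the proof is then a set of a priori estimates uniform in $N$. A first ``energy'' estimate, obtained by testing the $v_N$-equation with $v_N$ and combining it with $\tfrac{d}{dt}|X_N|^2$ through the ODE, yields bounds on $X_N$ in $C([0,T];\mathbb R^n)$, on $v_N$ in $L^\infty(0,T;L^2)$ and on $\partial_x v_N$ in $L^2(0,T;L^2)$; the cross and boundary terms produced by the integration by parts are absorbed using Young's inequality, the trace estimate $|v_N(1,t)|\le c\,\|v_N(t)\|_{H^1}$ and Gr\"onwall's lemma. A second, higher-order estimate, obtained by testing with $-\partial_{xx}v_N$ (equivalently $\partial_t v_N$), gives uniform bounds on $v_N$ in $L^\infty(0,T;H^1)\cap L^2(0,T;H^2)$ and on $\partial_t v_N$ in $L^2(0,T;L^2)$. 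These bounds let me extract weakly / weakly-$*$ convergent subsequences; since the whole system is linear, weak convergence already suffices to pass to the limit in every term and to identify the limit $(X,v)$ as a solution, the Aubin--Lions lemma providing the strong convergence needed only to treat the initial datum and the boundary traces. Undoing the lift via $u=v+\phi\,CX$, using the embedding $H^1(0,1)\hookrightarrow C([0,1])$ to render the trace $t\mapsto u(1,t)$ continuous, and invoking the Lions--Aubin interpolation theorem (so that $v\in L^2(0,T;H^2)$ with $\partial_t v\in L^2(0,T;L^2)$ gives $v\in C([0,T];H^1)$ — continuity down to $t=0$ relying on the parabolic smoothing of the $L^2$ datum), I recover the announced regularity for $u$ and for $X$. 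Uniqueness follows by linearity: the difference of two solutions solves the homogeneous system with zero data, and the first energy estimate forces it to vanish.

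The step I expect to be the main obstacle is closing these a priori estimates in the presence of the boundary coupling: the integration by parts that produces the dissipative term $-\gamma\|\partial_x v_N\|^2$ also leaves boundary contributions at $x=0$ and $x=1$, and the forcing terms $\gamma\phi''CX$ and $\phi C\dot X$ reinject the ODE state — and, through $\dot X$, the trace $v_N(1,t)$ — into the PDE energy balance. Controlling this feedback loop between $|X_N|^2$ and the norms of $v_N$, so that every constant stays independent of $N$, is where the trace inequalities, the particular choice of the lift $\phi$, and a carefully weighted combined energy functional have to be used together.
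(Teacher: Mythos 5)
Your strategy is correct and is essentially the fully rigorous version of what the paper only sketches. The paper's own proof never constructs approximations: it works directly on the original coupled system with its inhomogeneous boundary condition $u(0,t)=CX(t)$, differentiates the energy $E(t)=|X(t)|_n^2+\|u(t)\|^2_{H^1(0,1)}$ along (formal) solutions, controls the three cross terms mixing $X(t)$, $u(1,t)$ and $\partial_x u(0,t)$ by Young's inequality together with the embeddings $|u(1)|^2\le 2|u(0)|^2+2\|\partial_x u\|^2$ and $|\partial_x u(0)|^2\le\|\partial_{xx}u\|^2$, and concludes with Gr\"onwall, absorbing $\|\partial_{xx}u\|^2$ through the choice $0<\epsilon<\gamma$. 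You instead homogenize the boundary coupling by the lifting $v=u-\phi\,CX$, expand on the eigenfunctions $\sqrt{2}\sin\bigl((k-\tfrac12)\pi x\bigr)$ adapted to the mixed Dirichlet/Neumann conditions, and run the standard two-tier Galerkin estimates followed by weak compactness and limit passage. The lifting is the genuinely different ingredient — the paper needs none because it never projects onto a basis — and it is exactly what makes a bona fide Galerkin scheme well defined here; its price is the reinjection of $\dot X$ (hence of the trace $v_N(1,t)$) into the PDE forcing, which you correctly single out as the feedback loop to be absorbed. What the paper's route buys is brevity and estimates that directly prefigure the Lyapunov computations of Section 4; what yours buys is an actual construction of the solution rather than an a priori estimate on a solution assumed to exist.

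One caveat, shared with the paper but worth making explicit. Your second-tier estimate (testing with $-\partial_{xx}v_N$) requires $\|\partial_x v_N(0)\|$ to be bounded uniformly in $N$, i.e. initial data in $H^1$ compatible with the boundary conditions — not merely in $L^2$ as the lemma states. The parenthetical appeal to parabolic smoothing cannot repair this: smoothing yields $v\in C(]0,T];H^1)$, but continuity in $H^1$ \emph{at} $t=0$ fails for general $L^2$ data (there one only gets $C([0,T];L^2)$, or weighted-in-time bounds such as on $t\|\partial_x v(t)\|^2$). The paper's proof has the identical defect, since its energy $E(0)$ is finite only when $u^0\in H^1$. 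Similarly, both arguments produce Gr\"onwall bounds that grow exponentially in time, so the asserted membership in $L^2(0,+\infty;H^2(0,1))$ is, as proved, really a statement on bounded time intervals. Neither issue invalidates your scheme; just state the initial-data hypothesis your estimates actually use, and do not claim $H^1$-continuity at $t=0$ beyond it.
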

 \vspace{-0.5cm}
In the sake of consistency with the Lyapunov approach we will use in the stability study of our coupled system, we give below the formal proof using the Galerkin energy based method. One should notice that the proposed approach has been developed  in particular for parabolic equations in \cite[Chapter 7.1]{EvansBook}.\\
\begin{proof}
Let us define the total energy of System~\eqref{ODEheat} by
$
E(X(t),u(t)) = |X(t)|_n^2 + \|u(t)\|^2_{H^1(0,1)}.
$
In the sequel, we will write $E(t)=E(X(t),u(t))$ in order to simplify the notations.
Easy calculations based on the equations of System~\eqref{ODEheat} and integrations by parts give :
\begin{eqnarray*}
\dot E(t)~ &=& ~X(t)^\top (A^\top + A) X(t) + 2u(1,t)B^\top X(t) \\
&&-~2 \gamma \|\partial_x u(t) \|^2- 2 \partial_xu(0,t)(\gamma C + CA)X(t) \\
&&- ~2 \partial_xu(0,t)CBu(1,t) -2 \gamma \|\partial_{xx} u(t) \|^2.
\end{eqnarray*}
First, in order to deal with the three cross terms mixing $X(t)$, $u(1,t)$ and $\partial_x u(0,t)$, we use Young's inequality $\left(ab\leq \frac{a^2}{2\varepsilon} + \frac{\varepsilon b^2}{2} \right)$, and choosing each time appropriately the tuning parameter $\varepsilon$, one can obtain 
\begin{multline*}
\dot E(t) \leq M |X(t)|_n^2 + M |u(1,t)|^2 -2 \gamma \|\partial_x u(t) \|^2  \\
+ ~2 \epsilon |\partial_xu(0,t)|^2  -2 \gamma \|\partial_{xx} u(t) \|^2,
\end{multline*}
where, from now on,  $M>0$ is a generic contant depending on $A,B,C,\gamma,\epsilon$. 
\\
Second, since we have the Sobolev embeddings $H^1(0,1) \subset C([0,1])$ and $H^2(0,1) \subset C^1([0,1])$, one can write, omitting the time variable $t$, that
$$
 |u(1)|^2 \leq 2 |u(0)|^2  + 2 \|\partial_{x} u \|^2 \text{ and } |\partial_xu(0)|^2 \leq \|\partial_{xx} u \|^2.
$$
Along with $u(0) = CX$, it leads to  
\begin{eqnarray*}
\dot E(t) &\leq& M |X(t)|_n^2 + M \|\partial_x u(t) \|^2  + ~2 (\varepsilon - \gamma) \|\partial_{xx} u(t) \|^2.
\end{eqnarray*}
On the one hand, choosing $0< \epsilon < \gamma$, we can eliminate the last term (in $\|\partial_{xx} u(t) \|^2$, because its coefficient is negative) so that we get
$\dot E(t) \leq M E(t)$ ensuring, from Gr\"onwall's inequality, the existence of a unique solution $(X,u)$ in the space  
$ C([0,+\infty[;\mathbb R^n \times H^1(0,1 )).$ \\
On the other hand, we can also move the $\|\partial_{xx} u(t) \|^2$ term to the left hand side of the estimate and deduce from the existence of a finite upper bound that $u \in L^2(0, + \infty;H^2(0,1 ))$. Thereafter, using the heat equation $\partial_t u = \gamma \partial_{xx} u$ from \eqref{ODEheat}, we also get
$
\partial_t u \in L^2(0,+\infty ;L^2(0,1 )).
$
\end{proof}\\
These somewhat terse explanations allows us to manipulate the solution $(X,u)$ in the appropriate functional space along this article.

 \subsection{Equilibrium and stability}\label{Equilibrium}
 
 As proved in the preliminary study \cite{BSG-IFAC17}, if the matrix $A+BC$ is non singular, then system \eqref{ODEheat} has a unique equilibrium $(X_e=0,u_e \equiv 0)\in\mathbb R^n\times H^1(0,1;\mathbb R)$. The main result of this article is the construction of numerically tractable sufficient conditions, to obtain the exponential stability around the steady state $(0,0)$, which definition is recalled:
\begin{definition}
 System \eqref{ODEheat} is said to be exponentially stable if for all initial conditions $(X^0,u^0) \in\mathbb R^n\times H^1(0,1)$, there exist $K>0$ and $\delta>0$ such that for all $t>0$,
  \begin{equation}\label{ExpStab}
			E(X(t),u(t)) 
			\leq K e^{-\delta t} \left(|X^0|_n^2 + \|u^0\|_{H^1(0,1)}^2\right).
	\end{equation}
\end{definition}
\vspace{-0.5cm}
More precisely, our goal is then to construct a Lyapunov functional in order to narrow the proof of the stability of the complete infinite dimensional system \eqref{ODEheat} to the resolution of linear matrix inequalities (LMI). 
 \section{Main tools}\label{Tools}
Before stating our main result in the next section, we need to give precise details about the technical tools we will use in the proof : a Lyapunov functional, some Legendre polynomials and the Bessel inequality.
 \subsection{Lyapunov functional}\label{Lyapunov}
 
Inspired by the complete Lyapunov-Krasovskii functional, which is a necessary and sufficient conditions for stability for delay systems \cite{Gu03}, we consider a Lyapunov functional candidate for system \eqref{ODEheat} of the form:
\begin{eqnarray*}
	V(X(t),u(t)) = X^\top (t)PX(t)+2X^\top (t)\ds\int_0^1\hspace{-0.2cm}\mathcal Q(x)u(x,t)dx\\
	+\ds\int_0^1\hspace{-0.1cm} \int_0^1 \hspace{-0.1cm} u^\top (x_1,t)\mathcal T(x_1,x_2)u(x_2,t)dx_1dx_2\\
	+~\alpha \ds\int_0^1 \hspace{-0.1cm} | u(x,t)|^2  dx 
	+ \beta \ds\int_0^1 \hspace{-0.1cm} | u_x(x,t)|^2  dx,
\end{eqnarray*}
where the matrix $P\in \mathcal S_n^+$ and the functions $\mathcal Q\in C(L^2(0,1;\mathbb R^{n\times m}))$ and $\mathcal T\in C(L^2(0,1;\mathbb S^{m}))$ have to be determined. 
The first term and the two last terms of $V$ are a weighted version of the classical energy $E(t)$ of the system. The term depending on the function $\mathcal T$ has been recently considered in the literature in \cite{Peet-TAC17,AVP-Autom16}. The term depending on $\mathcal Q$ is introduced in order to represent the coupling between the ODE and the heat equation. 
\newline
Our objective is to define this Lyapunov functional in order to reduce the proof of the stability of the complete infinite dimensional system \eqref{ODEheat} to the resolution of LMIs. Since a part of the state $(X,u)$ of the system is distributed ($u$ being the solution of a heat equation and depending on a space variable $x$ in addition to the time $t$), it is proposed to impose a special structure for the functions $\mathcal Q$ and $\mathcal T$ in order to obtain numerically tractable stability conditions. The two functions will actually be build as projection operators over a finite dimensional orthogonal family :  the $N+1$ first shifted Legendre polynomials.

 \subsection{Properties of Legendre Polynomials}\label{Legendre}
 
Let us define here the shifted Legendre polynomials considered over the interval $[0,1]$ and denoted $\{ \mathcal L_k\}_{k\in \mathbb N}$. Instead of giving the explicit formula of these polynomials, we detail here their principal properties. One can find details and proofs in \cite{CouHilb-book}.
To begin with, the family $\{ \mathcal L_k\}_{k\in \mathbb N}$ is known to form an orthogonal basis of $L^2(0,1;\mathbb R)$ since 
$
\left< \mathcal L_j , \mathcal L_k \right> = \int_0^1 \mathcal L_j(x) \mathcal L_k(x)  dx = \frac 1{2k+1}\delta_{jk},
$
where $\delta_{jk}$ denotes the Kronecker delta, equal to $1$ if $j = k$ and to $0$ otherwise. Denote the corresponding norm of this inner scalar product
$
\|\mathcal L_k\| = \sqrt{\left< \mathcal L_k , \mathcal L_k \right>}  = 1/\sqrt{2k+1}.
$
The boundary values are given by:
	\begin{equation}\label{BC}
		\mathcal L_k (0) = (-1)^k , \qquad \mathcal L_k (1) =  1.
	\end{equation}
The first shifted  Legendre polynomials are: $\mathcal L_0(x) = 1$, $\mathcal L_1(x) = 2x-1$, $\mathcal L_2(x) =6x^2-6x +1$. Furthermore, the following derivation formula holds: 
	\begin{equation}\label{Deriv1}
		\mathcal L_k'(x) = \ds\sum_{j=0}^{k-1} (2j+1) (1-(-1)^{k+j})\mathcal L_j(x),	k\geq1,
	\end{equation}
	from which, denoting   $\ell_{kj} = (2j+1)(1-(-1)^{k+j})$  if  $ j\leq  k-1$ and $\ell_{kj} = 0$ if $ j\geq k$
we deduce that  for all	$k\geq2$, $\mathcal L_k''(x) = \ds\sum_{j=1}^{k-1} \sum_{i=0}^{j-1} \ell_{kj} \ell_{ji}\mathcal L_i(x)$, 
and $\mathcal L_0''(x) =\mathcal L_1''(x) =0$.
\begin{Rk}
For the record, the classical Legendre polynomials are defined on $[-1,1]$ as the orthonormalization of the family $\{1,x,x^2,x^3,... \}$ but are shifted here to $[0,1]$.
\end{Rk}
\vspace{-0.1cm}
It is now important to notice that any $y \in L^2(0,1)$ can be written
$
y = \ds\sum_{k\geq0} \left< y ,\mathcal L_k
\right> \mathcal L_k/ \|\mathcal L_k\|^2
$
and to set here 
\begin{equation}\label{def_mat1}
\begin{array}{rcll}
	U_N(t)& =&  \ds\vect_{k=0..N}  \left< u(t), \mathcal L_k \right>\quad& \mbox{in } \mathbb R^{N+1}, \\
	\mathds{1}_N &= & \left[ \begin{matrix} 1 & 1&\dots & 1 \end{matrix}\right]^\top  
		\quad& \mbox{in } \mathbb R^{N+1}, \\
	\mathds{1}_N^* &= & \left[ \begin{matrix} 1 & -1 &\dots & (-1)^N  \end{matrix}\right]^\top  
		\quad& \mbox{in } \mathbb R^{N+1},  \\
	L_N &=	&(\ell_{ij})_{i,j = 0..N} 
		\quad &\mbox{in } \mathbb R^{N+1, N+1},\\
	\mathcal{I}_{N} &= & diag (1 , 3,\dots, 2N+1 ) 
		\quad& \mbox{in } \mathbb R^{N+1,N+1}.  \\
\end{array}
\end{equation}
One should notice that for all $N\in \mathbb N^*$, the $L_N$ matrices are strictly lower triangular thanks to the definition of the $\ell_{k,j}$ below \eqref{Deriv1}. The following notations, that we will use below, stems from this:
\begin{equation}\label{def_mat2}
\begin{array}{c}
	L_N = \begin{bmatrix}
			L_{1,N}~  {\bf 0}_{N+1,1}
			\end{bmatrix}
		 \mbox{ with } L_{1,N}\mbox{ in } \mathbb R^{N+1, N},\\
	L_N^2 =	\begin{bmatrix}
			L_{2,N} ~ {\bf 0}_{N+1,2}
			\end{bmatrix}
		 \mbox{ with } L_{2,N}\mbox{ in } \mathbb R^{N+1, N-1}.
\end{array}
\end{equation}
The following properties will be useful for the stability analysis hereafter.
\begin{Pp}\label{PropDerivX}
	Let $u\in C(\mathbb R_+;L^2(0,1))$ satisfy the heat equation and its boundary conditions in \eqref{ODEheat}. The following  formula holds:
 		 \begin{eqnarray}	
		&\vect&_{k=0..N}  \left< \partial_x u(t), \mathcal L_k \right> \nonumber\\
		&=& - L_N U_N(t) + \mathds{1}_N u(1,t)  - \mathds{1}_N^* CX(t)   \label{L11}\\
		&=& - L_{1,N} U_{N-1}(t) + \mathds{1}_N u(1,t)  - \mathds{1}_N^* CX(t)  \label{L22}\\
		&=&  \left[ \begin{smallmatrix}
			~ - C^\top \mathds{1}_N^{*\top}  ~\\ 
			~ \mathds{1}_N^\top ~ \\
			~ - L_{1,N} ^\top ~ 
\end{smallmatrix}\right]^\top 
			\left[ \begin{smallmatrix}
			~ X(t) ~\\ 
			~ u(1,t) ~ \\
			~ U_{N-1}(t) ~ \end{smallmatrix}\right].\nonumber
		\end{eqnarray}
\end{Pp}
 \vspace{-0.7cm}
\begin{proof}
An integration by parts and the first derivation formula \eqref{Deriv1} of the Legendre polynomials yield
\begin{eqnarray*}
		 \left< \partial_x u(t) , \mathcal L_0 \right> &=& u(1,t) - u(0,t), \quad \mbox{ and } \forall k\geq 1\\
		\left< \partial_x u(t) , \mathcal L_k \right> &=&- \ds\sum_{j=0}^{k-1}  \ell_{kj} \! \left< u(t), \mathcal L_j \right> \!+\! u(1,t)  \!-\! u(0,t) (-1)^k.
 \end{eqnarray*}

Using the notations introduced in \eqref{def_mat1} we obtain equation \eqref{L11} and one can deduce \eqref{L22} from \eqref{def_mat2}.
\end{proof}
\begin{Pp}\label{PropDeriv}
	Let $u\in C(\mathbb R_+;L^2(0,1))$ satisfy the heat equation and its boundary conditions in \eqref{ODEheat}. The following time derivative formula holds if $\partial_t u\in C(\mathbb R_+;L^2(0,1 ))$:
 		 \begin{eqnarray}	
			&\dfrac 1\gamma& \dfrac d{dt} U_N(t) = \dfrac 1\gamma \vect_{k=0..N}  \left< \partial_t u(t), \mathcal L_k \right>\nonumber\\
		&~~=& L_N^2 U_N(t) + L_N \mathds{1}_N^* CX(t) - L_N  \mathds{1}_N u(1,t) \nonumber  \\
		&&- \mathds{1}_N^* u_x(0,t) \label{L1}\nonumber\\
		&~~=& L_{2,N} U_{N-2}(t) + L_N \mathds{1}_N^* CX(t)  - L_N  \mathds{1}_N u(1,t)  \\
		&&- \mathds{1}_N^* u_x(0,t) \nonumber\\
		&~~=& \left[ \begin{smallmatrix}
			~ C^\top  \mathds{1}_N^{*\top} L_N^\top ~\\ 
			~ - \mathds{1}_N^\top L_N^\top   ~ \\
			~ - \mathds{1}_N^{*\top} ~ \\
			~  L_{2,N}^\top ~ 
\end{smallmatrix}\right]^\top 
			\left[ \begin{smallmatrix}
			~ X(t) ~\\ 
			~ u(1,t) ~ \\
			~ u_x(0,t) ~ \\
			~ U_{N-2}(t) ~ \end{smallmatrix}\right]. \label{L2} 
		\end{eqnarray}
\end{Pp}
\begin{proof}
We obtain easily, using the heat equation and integrations by parts, along with the boundary \eqref{BC} and derivation formulas of the Legendre polynomials, that
\begin{eqnarray*}
		&&\dfrac d{dt} \left< u(t) , \mathcal L_0 \right>= -\gamma u_x(0,t), \\
 		&&\dfrac d{dt} \left< u(t) , \mathcal L_1 \right> = 2\gamma u(0,t) -2\gamma u(1,t) +\gamma u_x(0,t),\\
		&&	\dfrac d{dt} \left< u(t) , \mathcal L_k \right> =\gamma \ds\sum_{j=1}^{k-1} \sum_{i=0}^{j-1} \ell_{kj} \ell_{ji}\left< u(t), \mathcal L_i \right>
		+\gamma u(0,t)\times\\
			&&\qquad \ds\sum_{j=0}^{k-1} \ell_{kj} (-1)^j 
			-\gamma u(1,t)\sum_{j=0}^{k-1} \ell_{kj}  -\gamma u_x(0,t) (-1)^k
 \end{eqnarray*}
for all $k\geq 2$. The notations introduced in \eqref{def_mat1} allow to conclude to equation \eqref{L1}. It is then easy to deduce \eqref{L2} from  \eqref{def_mat2}. 
\end{proof}
 \begin{Rk}
 It is important to notice here that the main reason for the choice of a base of polynomials to truncate the infinite dimensional state $u$ is the fact that the derivation matrices $L_N$ and $L_N^2$ are strictly lower triangular. It has interesting consequences on the stability study of the whole system~\eqref{ODEheat} and is the cornerstone to obtain a hierarchy of tractable LMIs, in the same vein as in \cite{SG-SCL15}.
 \end {Rk}

 \subsection{Bessel-Legendre Inequality}\label{Bessel}
The following lemma provides a useful information.
 
\begin{Lem}\label{PropLeg}
	Let $u \in C(\mathbb R_+;L^2(0,1))$. The following  integral inequality holds for all $N\in \mathbb N$:
		\begin{equation}\label{Bessel}
			||u(t)||^2 \geq  U_N(t)^\top \mathcal I_N U_N(t) .
		\end{equation}
\end{Lem}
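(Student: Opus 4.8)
The plan is to read \eqref{Bessel} as a truncated Bessel inequality for the orthogonal (but not orthonormal) family $\{\mathcal L_k\}_{k\in\mathbb N}$. Since $u(t)\in L^2(0,1)$, I would introduce its orthogonal projection onto the finite dimensional subspace $\vect\{\mathcal L_0,\dots,\mathcal L_N\}$, namely
\[ u_N(\cdot,t) = \ds\sum_{k=0}^N \frac{\left< u(t),\mathcal L_k\right>}{\|\mathcal L_k\|^2}\,\mathcal L_k = \ds\sum_{k=0}^N (2k+1)\left< u(t),\mathcal L_k\right>\,\mathcal L_k, \]
where the second equality uses $\|\mathcal L_k\|^2 = 1/(2k+1)$ recalled in Section~\ref{Legendre}.

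The key step is a Pythagorean identity. By construction $\left< u(t)-u_N(t),\mathcal L_k\right>=0$ for every $k=0,\dots,N$, so the residual $u(t)-u_N(t)$ is orthogonal to $u_N(t)$, which gives
\[ \|u(t)\|^2 = \|u_N(t)\|^2 + \|u(t)-u_N(t)\|^2 \ \geq\ \|u_N(t)\|^2. \]
The inequality, rather than an equality, comes precisely from discarding the nonnegative remainder $\|u(t)-u_N(t)\|^2$; note that no completeness of the Legendre family is needed here, only its orthogonality.

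It then remains to identify $\|u_N(t)\|^2$ with the right-hand side of \eqref{Bessel}. Expanding the square and using $\left<\mathcal L_k,\mathcal L_j\right> = \delta_{kj}/(2k+1)$ collapses the double sum to
\[ \|u_N(t)\|^2 = \ds\sum_{k=0}^N (2k+1)\left< u(t),\mathcal L_k\right>^2 = U_N(t)^\top \mathcal I_N U_N(t), \]
since $U_N(t) = \vect_{k=0..N}\left< u(t),\mathcal L_k\right>$ and $\mathcal I_N = \diag(1,3,\dots,2N+1)$. There is no real analytical difficulty in this argument; the only point deserving care is the bookkeeping of the normalization weights $2k+1$, which stem from the fact that the shifted Legendre polynomials are orthogonal but not normalized, and which are exactly encoded in the diagonal matrix $\mathcal I_N$.
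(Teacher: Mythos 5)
Your proof is correct. It differs slightly in route from the paper's: the paper expands $u(t)$ in the \emph{full} Legendre series $u(t)=\sum_{k\geq 0}\left< u(t),\mathcal L_k\right>\mathcal L_k/\|\mathcal L_k\|^2$ (thus invoking the fact, recalled in Section~\ref{Legendre}, that the family is a complete orthogonal basis of $L^2(0,1)$), applies Parseval's identity $\|u(t)\|^2=\sum_{k\geq0}(2k+1)\left< u(t),\mathcal L_k\right>^2$, and then simply drops the nonnegative tail of the series beyond order $N$. You instead work with the orthogonal projection $u_N$ onto $\vect\{\mathcal L_0,\dots,\mathcal L_N\}$ and use the Pythagorean identity $\|u(t)\|^2=\|u_N(t)\|^2+\|u(t)-u_N(t)\|^2$, discarding the residual term. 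The two discarded quantities are in fact the same object, but your argument, as you correctly point out, needs only the orthogonality of the finite family $\{\mathcal L_0,\dots,\mathcal L_N\}$ and not completeness; it is therefore marginally more self-contained and would apply verbatim to any orthogonal (not necessarily complete) family. The paper's version is shorter given that the basis property has already been stated, but buys nothing more for this lemma. The bookkeeping of the weights $2k+1$ and the identification with $U_N(t)^\top\mathcal I_N U_N(t)$ is identical in both.
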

\begin{proof}
Estimate \eqref{Bessel} can be called the Bessel-Legendre inequality.
Since 
$
	u(t) = \sum_{k\geq0} \left< u(t) , \mathcal L_k \right> \mathcal L_k/\|\mathcal L_k\|^2,
$
using the orthogonality of the Legendre polynomials and 
$
	\|\mathcal L_k\|^2 = \left< \mathcal L_k , \mathcal L_k \right>  = 1/(2k+1),
$
we easily get 
$$
			\int_0^1\!\! u(x,t)^2 dx =   \sum_{k\geq0}\frac{\left< u(t), \mathcal L_k \right>^2 }{\|\mathcal L_k\|^2}
			\geq   \sum_{k=0}^N(2k\!+\!1) \left< u(t), \mathcal L_k \right>^2 .
$$
The formulation of Lemma~\ref{PropLeg} stems from notation \eqref{def_mat1}.
\end{proof}
 \section{Stability Analysis}\label{Stability}
 \subsection{Exponential stability result}\label{}
 Following the previous developments, $N$ being a prescribed positive integer, 
we introduce an approximate state of size $n+N+1$, composed by the state of the ODE system $X$ and the projection of the infinite dimensional state $u$ over the set of the Legendre polynomial of degree less than $N$. In other words, the approximate finite dimensional state vector is given by
$$
\left[\begin{array}{c}
	X(t)\\
	U_N(t)
\end{array}\right] = 
			\left[ \begin{matrix}
				X(t) \\ \ds\vect_{k=0..N}  \left< u(t), \mathcal L_k \right>
			\end{matrix} \right].
$$
The main objective of this article is to provide the following stability result for the coupled system~\eqref{ODEheat}, which is based on an appropriate Lyapunov  functional and the use of Property~\ref{PropDeriv}  and Lemma \ref{PropLeg}.

\begin{thm}\label{ThStab}
	Consider system \eqref{ODEheat} with a given thermal diffusivity $\gamma >0$. 
	If there exists an integer $N\geq 0$, such that there exist $\delta >0$, $\alpha>0$, $P\in \mathcal S_{n}$, $Q\in \mathbb R^{n,(N+1)m}$ and $T\in \mathcal S_{(N+1)m}$ satisfying the following LMIs 
	\begin{equation}\label{LMI2}
			\Phi_N=\left[ \begin{matrix}
			P & Q \\ 
			Q^\top & T
			\end{matrix}\right]
		\succ 0,
	\end{equation}
	\begin{equation}\label{LMI1}
			\Psi_{N} (\gamma) = \widetilde\Psi_{N} - \alpha\gamma  \Psi_{N,2} - 2 \beta \gamma   \Psi_{N,3} 
		\prec 0,
	\end{equation}
where 
	\begin{equation}\label{psi1}
			\widetilde\Psi_{N}=\hspace{-0.1cm}
			\begin{bmatrix}
			\Psi_{11} & PB -\gamma Q L_N \mathds{1}_N  & \hspace{-0.3cm}\Psi_{13}& \Psi_{14} \\ 
			{\ast} & 0 &\hspace{-0.3cm} - \beta B^\top C^\top &\Psi_{24}\\ 
			{\ast} &{\ast}  &\hspace{-0.3cm} 0 & -\gamma \mathds{1}_N^{*\top}T  \\
			{\ast} &{\ast}  &\hspace{-0.3cm}{\ast}  & \Psi_{44} \end{bmatrix}
	\end{equation}
with $\Psi_{11} = \mathrm{He}(PA + \gamma Q L_N \mathds{1}_N^{*} C )$, 
$\Psi_{13}=-\gamma Q  \mathds{1}_N^*  - \alpha\gamma C^\top - \beta A^\top C^\top$, 
$\Psi_{14}=A^\top Q + \gamma C^\top \mathds{1}_N^{*\top} L_N^\top T + \gamma Q L_N^2 $,
$\Psi_{24}=B^\top Q -\gamma \mathds{1}_N^{T} L_N^\top T$, and 
$\Psi_{44} =  \mathrm{He}(\gamma L_N^{2\top} T)$,
\begin{equation}\label{psi2}	\hspace{-0.8cm}
	\begin{array}{rcl}
	\Psi_{N,2}&=&  \left[ \begin{smallmatrix}
			 - C^\top \mathds{1}_{N+1}^{*\top} \\ 
			 \mathds{1}_{N+1}^\top \\
			{\bf 0}_{1, N+2} \\
			- L_{1,N+1}^\top \end{smallmatrix}\right]
			 \mathcal I_{N+1} 
			 \left[ \begin{smallmatrix}
			 - C^\top \mathds{1}_{N+1}^{*\top} \\ 
			 \mathds{1}_{N+1}^\top \\
			{\bf 0}_{1, N+2} \\
			- L_{1,N+1}^\top \end{smallmatrix}\right]^\top \\
        		  \Psi_{N,3}&=&  \left[\begin{smallmatrix}
			 C^\top \mathds{1}_{N+2}^{*\top} L_{N+2}^\top \\ 
			 -   \mathds{1}_{N+2}^\top L_{N+2}^\top \\
			 - \mathds{1}_{N+2}^{*\top}  \\
			  L_{2,N+2}^\top ~ \end{smallmatrix}\right]
			 \mathcal I_{N+2} 
			  \left[\begin{smallmatrix}
			 C^\top \mathds{1}_{N+2}^{*\top} L_{N+2}^\top \\ 
			 -   \mathds{1}_{N+2}^\top L_{N+2}^\top \\
			 - \mathds{1}_{N+2}^{*\top}  \\
			  L_{2,N+2}^\top ~ \end{smallmatrix}\right]^\top 
			\end{array}
	\end{equation}
 then the coupled system~\eqref{ODEheat} is exponentially stable. 
Indeed, there exist constants $K>0$ and $\delta> 0$ such that:
\begin{equation}\label{ExpStab}
			E(t) 
			\leq K e^{-\delta t} \left(|X^0|_n^2 + \|u^0\|^2\right),  \forall t>0.
	\end{equation}
\end{thm}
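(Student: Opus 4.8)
The plan is to instantiate the candidate functional $V$ of Section~\ref{Lyapunov} with $\mathcal Q$ and $\mathcal T$ taken as truncated Legendre expansions, $\mathcal Q(x)=\sum_{k=0}^N Q_k\mathcal L_k(x)$ and $\mathcal T(x_1,x_2)=\sum_{j,k=0}^N \mathcal L_j(x_1)T_{jk}\mathcal L_k(x_2)$, so that the coupling and the double integral collapse onto the projection $U_N(t)$. With $Q=[Q_0,\dots,Q_N]$ and $T=(T_{jk})$, orthogonality turns the functional into
\begin{equation*}
V=\begin{bmatrix} X \\ U_N\end{bmatrix}^\top \Phi_N \begin{bmatrix} X \\ U_N\end{bmatrix}+\alpha\|u\|^2+\beta\|\partial_x u\|^2 .
\end{equation*}
First I would prove that $V$ is equivalent to the energy $E$. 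The lower bound $V\geq\eps E$ follows from \eqref{LMI2}: since $\Phi_N\succ0$ its smallest eigenvalue dominates $|X|_n^2$, while the $\alpha,\beta$ terms reproduce $\|u\|^2_{H^1(0,1)}$. The upper bound $V\leq\bar\eps E$ uses Cauchy--Schwarz in the form $|U_N|^2\leq\big(\sum_{k=0}^N\tfrac1{2k+1}\big)\|u\|^2$ to majorise the $\Phi_N$ block by $\|u\|^2$.

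The core step is to differentiate $V$ along the solutions of \eqref{ODEheat}. Using $\dot X=AX+Bu(1,t)$, the equation $\partial_t u=\gamma\partial_{xx}u$, repeated integrations by parts, and the boundary relations $u(0,t)=CX(t)$, $\partial_x u(1,t)=0$ together with $\partial_{xx}u(0,t)=\tfrac1\gamma C(AX+Bu(1,t))$ obtained by differentiating $u(0,t)=CX(t)$ in time, I would reduce every finite-dimensional contribution to a quadratic form in the augmented vector
\begin{equation*}
\zeta(t)=\begin{bmatrix} X(t) & u(1,t) & \partial_x u(0,t) & U_N(t)\end{bmatrix}^\top .
\end{equation*}
The term $2X^\top Q\dot U_N$ is rewritten via the first identity of Property~\ref{PropDeriv} at order $N$, which expresses $\tfrac1\gamma\dot U_N$ in precisely these variables; collecting all contributions reproduces $\widetilde\Psi_N$ of \eqref{psi1}. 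Differentiating the two weighted energy terms moreover produces the negative infinite-dimensional remainders $-2\alpha\gamma\|\partial_x u\|^2$ and $-2\beta\gamma\|\partial_{xx}u\|^2$.

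These remainders are absorbed through the Bessel--Legendre inequality \eqref{Bessel}. The key observation, and the reason the scheme closes, is that the strictly lower-triangular derivation matrices make $\vect_{k=0..N+1}\langle\partial_x u,\mathcal L_k\rangle$ and $\vect_{k=0..N+2}\langle\partial_{xx}u,\mathcal L_k\rangle$ depend only on $\zeta$: differentiation lowers the polynomial degree, so no new projection $U_{N+1}$ or $U_{N+2}$ is introduced. Applying \eqref{Bessel} at order $N+1$ to $\partial_x u$ (through \eqref{L22} of Property~\ref{PropDerivX}) and at order $N+2$ to $\partial_{xx}u=\tfrac1\gamma\partial_t u$ (through Property~\ref{PropDeriv}) yields $-\|\partial_x u\|^2\leq-\zeta^\top\Psi_{N,2}\zeta$ and $-\|\partial_{xx}u\|^2\leq-\zeta^\top\Psi_{N,3}\zeta$, with $\Psi_{N,2},\Psi_{N,3}$ as in \eqref{psi2}, hence $\dot V\leq\zeta^\top\Psi_N(\gamma)\zeta$.

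To conclude, the strict inequality \eqref{LMI1}, $\Psi_N(\gamma)\prec0$, gives $\dot V\leq-c|\zeta|^2\leq0$; by strictness and continuity one may then fix a small $\delta>0$ for which $\dot V+2\delta V\leq0$ still holds, the $\delta$-term being controlled using $V\leq\bar\eps E$ together with \eqref{Bessel}. Integrating yields $V(t)\leq e^{-2\delta t}V(0)$, and the energy equivalence delivers the exponential estimate \eqref{ExpStab}. I expect the main obstacle to be analytic rather than algebraic: the differentiation of $\beta\|\partial_x u\|^2$ formally invokes $\partial_{xxx}u$, so the computation must first be carried out on smooth solutions, dense by the well-posedness and regularity established above, and then extended to arbitrary $H^1(0,1)$ data by density. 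Keeping the index bookkeeping consistent so that the order-$(N+1)$ and order-$(N+2)$ Bessel bounds close within the single fixed vector $\zeta$ is the delicate point, and it rests entirely on the triangular structure emphasised after \eqref{def_mat2}.
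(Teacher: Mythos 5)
Your proposal tracks the paper's own strategy almost step by step: same truncated Legendre choice of $\mathcal Q,\mathcal T$, same augmented vector $\zeta=(X,u(1),\partial_x u(0),U_N)$, same use of Property~\ref{PropDeriv} to reproduce $\widetilde\Psi_N$, and the same Bessel bounds at orders $N+1$ and $N+2$ for the two distributed remainders. However, there is a genuine gap in your concluding step. After absorbing the remainders you keep only $\dot V\leq\zeta^\top\Psi_N(\gamma)\,\zeta\leq -c|\zeta|^2$ and then assert that strictness plus continuity lets you fix $\delta>0$ with $\dot V+2\delta V\leq0$, ``the $\delta$-term being controlled using $V\leq\bar\varepsilon E$ together with \eqref{Bessel}.'' This cannot work as stated. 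The quantity $|\zeta|^2$ only contains $|X|^2$, $|u(1)|^2$, $|\partial_x u(0)|^2$ and $|U_N|^2$, whereas $V$ contains the genuinely infinite-dimensional terms $\alpha\|u\|^2+\beta\|\partial_x u\|^2$. The Bessel--Legendre inequality \eqref{Bessel} bounds $U_N^\top\mathcal I_N U_N$ \emph{from above} by $\|u\|^2$ --- the wrong direction for what you need --- and no finite-dimensional quadratic form in $\zeta$ can dominate $\|u\|^2$ or $\|\partial_x u\|^2$ (the part of $u$ orthogonal to the first $N+1$ Legendre polynomials is invisible in $\zeta$). So from $\dot V\leq-c|\zeta|^2$ alone the inequality $\dot V+2\delta V\leq 0$ does not follow for any $\delta>0$.

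The missing idea is precisely why the LMI \eqref{LMI1} carries the coefficient $\alpha\gamma$ on $\Psi_{N,2}$ rather than $2\alpha\gamma$: in \eqref{VNdot} one must split the remainder as
\begin{equation*}
-2\alpha\gamma\|\partial_x u\|^2\;\leq\;-\alpha\gamma\,\zeta^\top\Psi_{N,2}\,\zeta\;-\;\alpha\gamma\|\partial_x u\|^2,
\end{equation*}
keeping a \emph{spare} distributed term $-\alpha\gamma\|\partial_x u\|^2$ outside the quadratic form instead of discarding it as you do. Then $\Psi_N\prec0$ controls $|X|^2$ and $|u(1)|^2$, and the elementary inequality $\|u\|^2\leq 2|u(1)|^2+2\|\partial_x u\|^2$ lets the spare term reconstruct the full energy, giving $\dot V\leq-\varepsilon_3 E$; combined with the equivalence $\varepsilon_1E\leq V\leq\varepsilon_2E$ (your bounds here are fine, Cauchy--Schwarz in place of Bessel is an acceptable variant) this yields $\dot V+(\varepsilon_3/\varepsilon_2)V\leq0$ and hence \eqref{ExpStab}. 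As a minor remark, your worry about $\partial_{xxx}u$ and a density argument is unnecessary: differentiating $\beta\|\partial_x u\|^2$ one integrates by parts in $x$ to get $-2\beta\int_0^1\partial_t u\,\partial_{xx}u\,dx+2\beta\left[\partial_t u\,\partial_x u\right]_0^1$, and the heat equation converts this into $-\tfrac{2\beta}\gamma\|\partial_t u\|^2$ plus boundary terms, so the regularity granted by the existence lemma already suffices.
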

 
 \begin{Rk}
One can point out the robustness of the approach with respect to the triplet $(A,B,\gamma)$, meaning that we could have  $A,B$ and $\gamma$ uncertain, switched, or time-varying... without loosing the stability property. It suffices indeed then to test these LMIs at the vertices of a polytope defining the uncertainties of the triplet. 
\end{Rk}

 In order to reveal the approximate state $U_N$ in the candidate Lyapunov functional $V$ written in section~\ref{Lyapunov}, we select the functions $\mathcal Q$ and $\mathcal T$ as follows:
$
\mathcal Q(x)=\sum_{k=0}^NQ_k\mathcal L_k(x)$, where $\{Q_i\}_{i = 0..N}$ belong to $ \mathbb R^{n}$ and 
$\mathcal T(x_1,x_2)=\sum_{i=0}^N\sum_{j=0}^NT_{ij}\mathcal L_i(x_1)\mathcal L_j(x_2)$,
where $\{T_{ij} =T_{ji}^\top \}_{i,j = 0..N}$ belong to $ \mathbb R$. 
Therefore we can write
\begin{eqnarray}\label{VNdef}
	&&V_N(t) :=V(X(t),u(t))=  
\begin{bmatrix}	X(t)\\U_N(t) \end{bmatrix}^\top 
\begin{bmatrix}	P & Q \\ Q^\top & T	\end{bmatrix}
\begin{bmatrix}	X(t)\\U_N(t) \end{bmatrix}\nonumber\\
&&+~ \alpha  \ds\int_0^1 \hspace{-0.1cm} |u(x,t)|^2  dx
+ \beta  \ds\int_0^1 \hspace{-0.1cm} |\partial_x u(x,t)|^2  dx, 
\end{eqnarray}
where $Q =  [Q_0\  \dots \ Q_N] \in \mathbb R^{n,N+1}$ and  
$T =	(T_{jk})_{j,k = 0..N}  $ in  $\mathbb R^{N+1,N+1}$.
In the following subsection, conditions for exponential stability of the origin of system \eqref{ODEheat} can be obtained using the LMI framework. More particularly, we aim at proving that the functional $V_N$ is positive definite and satisfies $\dot V_N(t) + 2 \delta V_N(t) \leq 0$ for a prescribed $\delta > 0$ and under the LMIs of Theorem~\ref{ThStab}.
 
 \subsection{Proof of the Stability Theorem}\label{proof}
The proof consists in showing that, if the LMIs \eqref{LMI2} and \eqref{LMI1} are verified for a given $N\geq0$, then there exist three positive scalars $\varepsilon_1,\varepsilon_2$ and  $\varepsilon_3$ such that for all $t>0$,
\begin{gather}
 	\varepsilon_1 E(t) \leq V_N(t) \leq   \varepsilon_2 E(t),\label{boundsVN}\\
	 	\dot V_N(t) \leq  -\varepsilon_3 E(t)\label{boundsdVN}.
\end{gather}
Indeed, on the one hand, its suffices to notice that we obtain directly from \eqref{boundsVN} and \eqref{boundsdVN}
$
\dot V_N(t)  + \dfrac{\varepsilon_3}{\varepsilon_2} V_N(t) \leq 0
$
so that 
$
\dfrac d{dt} \left( V_N(t) e^{\varepsilon_3 t / \varepsilon_2 } \right) \leq 0
$
and integrating in time, we get 
$
V_N(t) \leq V_N(0) e^{-\varepsilon_3 t / \varepsilon_2}$
for all $t\geq 0$.
On the other hand, from \eqref{boundsVN}, we can finally write 
$$
\varepsilon_1 E(t) \leq V_N(t)  \leq V_N(0) e^{-\varepsilon_3 t / \varepsilon_2} \leq  \varepsilon_2 E(0)  e^{-\varepsilon_3 t / \varepsilon_2},
$$
allowing to conclude \eqref{ExpStab}.

\textbf{Existence of $\varepsilon _1$:} Since $\alpha>0$, $\beta>0$ and $\Phi_N\succ0$, there exists a sufficiently small $\varepsilon_1 >0$ such that
$ \varepsilon_1 \leq \alpha$, $\varepsilon_1 \leq \beta$  and $\Phi_N= \left[ \begin{smallmatrix}
	P & Q \\ 
	Q^\top & T
\end{smallmatrix}\right]\succ  
\varepsilon_1 \left[ \begin{smallmatrix}
 I_n & 0 \\ 
	0 &0
\end{smallmatrix}\right].$ 
Therefore, we obtain a lower bound of $V_N$ depending on the energy $E(t)$:
$$
\begin{array}{lcl}
V_N(t)
&\geq& \varepsilon_1 (|X(t)|_n^2+ \|u(t)\|^2)+\beta ||\partial_xu(t)||^2\geq\varepsilon_1 E(t).\\
\end{array}
$$
 \vspace{-0.7cm}

\textbf{Existence of $\varepsilon _2$:} There exists a sufficiently large scalar $\lambda>0$ such that 
$
 \left[ \begin{smallmatrix}
	P & Q \\ 
	Q^\top & T
\end{smallmatrix}\right]\preceq \lambda  \left[ \begin{smallmatrix}
	I_n & 0 \\ 
	0 & \mathcal I_N
\end{smallmatrix}\right],
$ 
yielding 
$$	V_N \leq \lambda |X|_n^2 +\lambda U_N^\top \mathcal I_NU_N + \alpha \|u\|^2 + \beta \|\partial_x u\|^2 .
$$
Applying Lemma~\ref{PropLeg} to the second term of the right-hand side ensures that
with $\varepsilon_2 = \max \{\lambda_{\max}\left[\begin{smallmatrix}
	P & Q \\ 
	Q^\top & T
\end{smallmatrix} \right]+\alpha,\beta \}$, one has 
$
	V_N(t) 
	\leq \varepsilon_2 E(t).
$

\textbf{Existence of $\varepsilon _3$:}
In order to prove now that \eqref{boundsdVN} relies on the solvability of the LMI \eqref{LMI1}, 
we need to define an augmented approximate state vector of size $n +N+3$ given by
$
	\xi_N(t)  = \left[ X(t)^\top, u(1,t), u_x(0,t), U_N(t)^\top \right]^\top
$. For simplicity, we omit the variable $t$ in the sequel.

\textit{Step 1:}
Let us split the computation of $\dot V_N$ into three terms, namely $\dot V_{N,1}$, $\dot V_{N,2}$ and $\dot V_{N,3}$ corresponding to each term of $V_N$ in \eqref{VNdef}. On the one hand, using the first equation in system \eqref{ODEheat} and Property~\ref{PropDeriv}, we have :
$$
\dfrac d{dt}  \left[\begin{smallmatrix}
	X\\
	U_N
\end{smallmatrix} \right]= \left[\begin{smallmatrix}
	AX+ Bu(1)\\ 
	\gamma L_N^2 U_N +\gamma L_N \mathds{1}_N^* CX  -\gamma L_N  \mathds{1}_N u(1)   -\gamma \mathds{1}_N^* u_x(0) 
			\end{smallmatrix}\right]
$$
so that we can calculate
$$
\dot V_{N,1} 
 =  \dfrac d{dt} \left( \left[\begin{smallmatrix}
	X\\
	U_N
\end{smallmatrix}\right] ^\top \left[ \begin{smallmatrix}
			P & Q \\ 
			Q^\top & T
			\end{smallmatrix}\right]
 \left[\begin{smallmatrix}
	X\\
	U_N
\end{smallmatrix}\right]  \right) 
= \xi_N^\top ~\Psi_{N,1}(\gamma)~   \xi_N
$$
with 
	$		\Psi_{N,1}=
			 \left[ \begin{smallmatrix}
			~\Psi_{11} & PB -\gamma Q L_N \mathds{1}_N  & -\gamma Q  \mathds{1}_N^* & \Psi_{14} \\ 
			{\ast} & 0 & 0 &\Psi_{24}\\ 
			{\ast} &{\ast}  & 0 & -\gamma \mathds{1}_N^{*\top}T  \\
			{\ast} &{\ast}  &{\ast}  & \Psi_{44} \end{smallmatrix}\right] $
where $\Psi_{11} $, $\Psi_{14} $, $\Psi_{24}$ and $\Psi_{44} $ are defined in Theorem~\ref{ThStab}.

On the other hand, using the heat equation in \eqref{ODEheat}, and an integration by parts, we get both
 \begin{eqnarray*}
 	 \dot V_{N,2}& =&\alpha  \int_0^1 \hspace{-0.1cm} \partial_t\left(  \left| u(x)\right|^2  \right) dx  
	 =2 \alpha  \int_0^1 \hspace{-0.1cm} u(x) \partial_t u(x) dx \\
	&=&  2 \alpha \gamma  \int_0^1  u(x) \partial_{xx}u(x) dx \\
	&=& - 2 \alpha \gamma  \int_0^1  \left| \partial_{x}u(x)\right|^2 dx  + 2 \alpha \gamma  \left[  u \partial_{x}u \right]_{0}^1 \\
	&=&  - 2 \alpha \gamma  \|\partial_{x}u\|^2  - 2 \alpha \gamma C Xu_x(0) 
\end{eqnarray*}
and 
\begin{eqnarray*}
 	 \dot V_{N,3}& =&\beta  \int_0^1 \hspace{-0.1cm} \partial_t\left(  \left| \partial_xu(x)\right|^2  \right) dx 
	 = 2 \beta  \int_0^1 \hspace{-0.1cm} \partial_{tx}u(x) \partial_xu(x)  dx  \\
	&=&  - 2 \beta   \int_0^1  \partial_t u(x) \partial_{xx}u(x) dx + 2 \beta  \left[ \partial_{t} u \partial_{x}u \right]_{0}^1\\
	&=& - 2 \dfrac\beta \gamma  \int_0^1  \left| \partial_{t}u(x)\right|^2 dx  - 2 \beta \partial_{t} u(0) \partial_{x}u(0) \\
	&=& - 2 \dfrac\beta \gamma   \|\partial_{t}u\|^2  - 2 \beta \partial_{x}u(0) C(AX +Bu(1)).
\end{eqnarray*}
Merging the expressions of $\dot V_{N,1},\dot V_{N,2}$ and $\dot V_{N,3}$ yields
\begin{eqnarray}\label{VNdot}		
	\dot V_N &=&  \xi_N^\top ~\Psi_{N,1}(\gamma)~   \xi_N  
	- 2 \alpha \gamma \|\partial_{x}u\|^2 - 2 \dfrac\beta \gamma \|\partial_t u\|^2 \nonumber \\
	&& - 2 \alpha \gamma C Xu_x(0)   - 2 \beta \partial_{x}u(0) C(AX +Bu(1)) \nonumber\\
	&=&  \xi_N^\top ~\widetilde\Psi_{N}(\gamma)~   \xi_N  
	- 2 \alpha \gamma \|\partial_{x}u\|^2 - 2 \dfrac\beta \gamma \|\partial_t u\|^2 
\end{eqnarray}
where $\widetilde\Psi_{N}(\gamma)$ is defined in \eqref{psi1}.

\textit{Step 2:}
Let us explain here how we can deal with the terms $ \|\partial_{x}u(t)\|^2$ and $ \|\partial_{t}u(t)\|^2$. Following the proof of Lemma~\ref{PropLeg}, up to the order $N+1$,  we can write, using an integration by parts and the derivation formula in Property~\ref{PropDerivX} of the Legendre polynomial 
 \begin{eqnarray*}
 	&& \|\partial_{x}u\|^2
	 	\geq   \sum_{k=0}^{N+1}(2k+1)\left| \left<\partial_{x} u, \mathcal L_k \right>\right|^2\\
		 &\geq& \left[ \begin{smallmatrix}
			~ X ~\\ 
			~ u(1) ~ \\
			~ U_{N} ~ \end{smallmatrix}\right]^\top 
			\left[ \begin{smallmatrix}
			~ -  C^\top \mathds{1}_{N+1}^{*\top} ~\\ 
			~ \mathds{1}_{N+1}^\top ~ \\
			~ - L_{1,N+1}^\top ~ \end{smallmatrix}\right]
			 \mathcal I_{N+1} 
			 \left[ \begin{smallmatrix}
			~ - C^\top \mathds{1}_{N+1}^{*\top} ~\\ 
			~ \mathds{1}_{N+1}^\top ~ \\
			~ - L_{1,N+1}^\top ~ \end{smallmatrix}\right]^\top 
			\left[ \begin{smallmatrix}
			~ X ~\\ 
			~ u(1) ~ \\
			~ U_{N} ~ \end{smallmatrix}\right]\\
\end{eqnarray*}
One can deduce that with $\Psi_{N,2} $ defined in \eqref{psi2},
\begin{equation}\label{p2}
 	-  \|\partial_{x}u(t)\|^2 \leq  -   \xi_N^\top (t)~ \Psi_{N,2} ~ \xi_N (t).
\end{equation}
Similarly, using  Property~\ref{PropDeriv} and Lemma~\ref{PropLeg} up to the order $N+2$, we have 
 \begin{multline*}
 	  \dfrac  1\gamma  \|\partial_{t}u(t)\|^2
	 	\geq  \dfrac  1\gamma \dfrac {dU_{N+2}^\top }{dt} ~ \mathcal I_{N+2} \dfrac {dU_{N+2}}{dt}\\
		 \geq \gamma
			\left[ \begin{smallmatrix}
			~ X ~\\ 
			~ u(1) ~ \\
			~ u_x(0) ~ \\
			~ U_{N} ~ \end{smallmatrix}\right]^\top 
			\Psi_{N,3} 
			\left[ \begin{smallmatrix}
			~ X ~\\ 
			~ u(1) ~ \\
			~ u_x(0) ~ \\
			~ U_{N} ~ \end{smallmatrix}\right]
\end{multline*}
with $\Psi_{N,3} $ defined in \eqref{psi2} so that 
\begin{equation}\label{p3}
 	-  \dfrac  1\gamma  \|\partial_{t}u(t)\|^2 \leq - \gamma  \xi_N^\top (t) \Psi_{N,3} ~  \xi_N(t)  .
\end{equation}
\textit{Step 3:} Since we assume $\Psi_N \prec 0$, then choosing $\varepsilon =  \lambda_{\min}(-\Psi_N) /2 $, we get
$\Psi_N \prec  
 - \varepsilon \left[ \begin{smallmatrix}
 I_n & 0 & 0 & 0 \\ 
	* &2 & 0 &  0\\
		* & * & 0 &  0\\
	* & * & * &  0\\
\end{smallmatrix}\right].
$
Therefore, we can write from \eqref{VNdot}, \eqref{p2} and \eqref{p3}, choosing  $\varepsilon_3 = \min \left\{\frac 13 \alpha \gamma, \lambda_{\min}(-\Psi_N) /2 \right\}$,
\begin{eqnarray*}
	\dot V_N(t) & \leq &   \xi_N^\top (t) ~\widetilde\Psi_{N}~   \xi_N  (t) - \alpha \gamma  \|\partial_{x}u(t)\|^2 - 2 \dfrac\beta \gamma \|\partial_t u(t)\|^2 \\
	&&  -  3 \varepsilon_3  \|\partial_{x}u(t)\|^2  \\
	& \leq &   \xi_N^\top (t) \left( \widetilde\Psi_{N} -  \alpha \gamma \Psi_{N,2} - 2 \beta \gamma \Psi_{N,3} \right)   \xi_N  (t)\\
	&&-  3 \varepsilon_3  \|\partial_{x}u(t)\|^2\\
	& \leq &   \xi_N^\top (t)~ \Psi_{N} ~  \xi_N  (t)-  3 \varepsilon_3  \|\partial_{x}u(t)\|^2\\
	& \leq &  - \varepsilon_3 |X(t)|^2_n - 2 \varepsilon_3 |u(1)|^2-  3 \varepsilon_3  \|\partial_{x}u(t)\|^2.
\end{eqnarray*}
Finally, since one can easily prove that for any $u \in H^1(0,1)$,
$$\|u\|^2 \leq 2 |u(1)|^2 + 2  \|\partial_{x}u\|^2,$$ we obtain
$$\dot V_N(t) \leq  - \varepsilon_3 |X(t)|^2_n - \varepsilon_3 \|u(t)\|^2-   \varepsilon_3  \|\partial_{x}u(t)\|^2$$
which is precisely \eqref{boundsdVN}. One can therefore conclude to the exponential stability of system \eqref{ODEheat}.

\section{Numerical example}\label{Numerics}

 Our goal here is to propose a numerical illustration that can highlight the possibilities and tractability of the stability LMI tests provided by Theorem~1. Hence, we are presenting an example where the closed-loop system depends on two parameters : the thermal diffusivity $\gamma$ of the heat equation and a parameter $K$. This numerical example is formerly issued from the field of time delay systems (see e.g. \cite{Sipahi2011}) and $K$ enters the model as follows: 
$$
A=
\left[\begin{smallmatrix}
 0 &0& 1& 0\\
 0& 0& 0& 1\\
 -10 -  K& 10& 0& 0\\
 5 &-15& 0 &-0.25
\end{smallmatrix}\right],\quad 
B=\left[\begin{smallmatrix}
   0\\
   0\\
   1\\
   0
  \end{smallmatrix}\right],\quad C=\left[\begin{smallmatrix}
  K\\ 0\\ 0\\ 0
  \end{smallmatrix}\right]^T.
$$

This data triplet $(A,B,C)$ has indeed already been considered in the context of time delay systems where the delayed matrix is $A_d = BC$. The main motivation for studying this example arises from the fact the stability region has a very complicated shape, that is hard to detect using a Lyapunov-Krasovskii functional  approach. We will see that the stability region is difficult to detect as well for our system \eqref{ODEheat} with these values for $A$, $B$, $C$. 

\begin{figure}[t]
\includegraphics[width=0.5\textwidth]{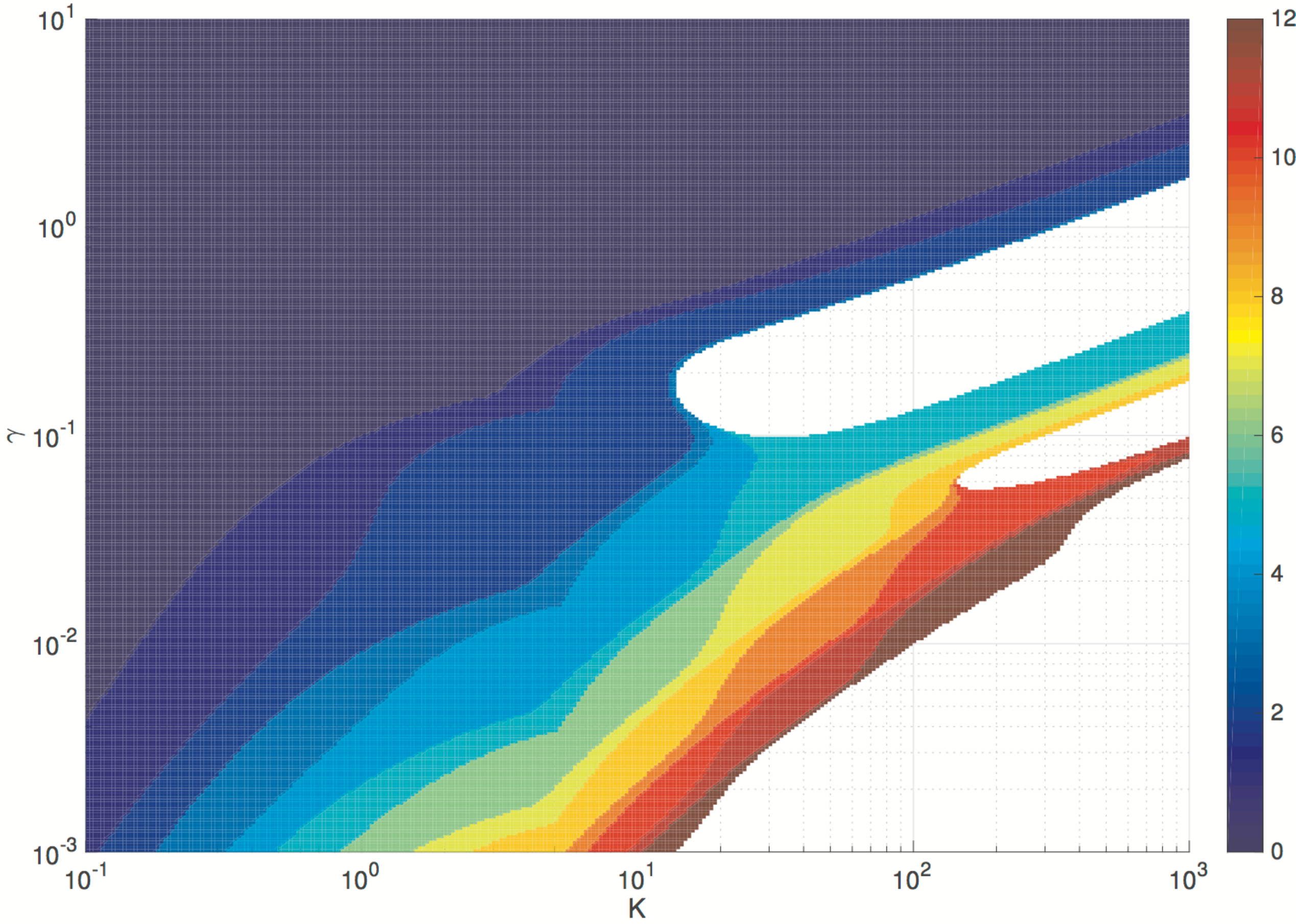}
\caption{Stability region in the plan $(K,\gamma)$, obtained using Theorem \ref{ThStab} for $N=0,\dots, 12$.} \label{Plot_K_gamma}
\end{figure}

In order to illustrate the potentialities of Theorem \ref{ThStab}, we have proposed Figure \ref{Plot_K_gamma}, depicting in the plan $(K,\gamma)$ and in logarithmic scales, for which values of $N$ solutions to the LMI problem (\ref{LMI2}-\ref{LMI1}) have been found.
The white area corresponds to values of $(K,\gamma)$ for which no solutions have been obtained for $N<13$. The darkest area corresponds to the stability region obtained with $N=0$ in Theorem \ref{ThStab}. The general tendency presented in Figure~\ref{Plot_K_gamma} is that for large values of~$\gamma$, stability is guaranteed. However, for small values of $\gamma$, peculiar stability regions are detected. 
One can see that increasing $N$ in Theorem \ref{ThStab} allows to enlarge the stability regions as illustrated in the hierarchical structure of LMIs (\ref{LMI2}-\ref{LMI1}). Interestingly, Figure \ref{Plot_K_gamma} also detects two instability zones, where (\ref{LMI2}) or (\ref{LMI1}) are not solvable, even for larger values of $N$. 
 \begin{Rk}
Figure~\ref{Plot_K_gamma} has also the interest of illustrating the hierarchy that our approach suggests. One sees clearly  the progression of the guaranteed domain of stability with the increase of $N$. 
\end{Rk}
In order to illustrate the stability regions depicted in Figure \ref{Plot_K_gamma}, several temporal simulations of the coupled-system have been provided in Figure \ref{fig:3sim}. They correspond to system \eqref{ODEheat} with the same numerical values $(A,B,C)$ and the particular choice of $K=100$. This selection of $K$ is relevant since there is an interval of values of $\gamma$ included in $[0.1,\ 0.2]$ such that the LMIs conditions of Theorem~\ref{ThStab} are not verified even for large values of~$N$.  Under the initial conditions $ u^0(x)=CX^0 - 20 x(x-2)+10(1-cos(8\pi x))$ and $X^0=\begin{bmatrix}0& 1 &-1 &0\end{bmatrix}$.
and noting that this is compatible with the requirements $u^0(0)=CX^0$ and $\partial _xu^0(1)=0$, 
three simulations are provided with
\begin{itemize}
\item [(a)] $\gamma=1$, corresponding to a stable region according to Theorem~\ref{ThStab} with $N=0$;
\item [(b)]$\gamma=0.2$, corresponding to a region for which Theorem~\ref{ThStab} has no solution for any $N\leq 12$;
\item [(c)]$\gamma=0.05$, which, according to Theorem~\ref{ThStab} with $N\geq 5$, is exponentially stable.  
\end{itemize}
Simulations of the coupled ODE - Heat PDE have been performed using classical tools available in the literature. The ODE has been discretized using a Runge-Kutta algorithm of order 4 with a principal step $\delta_t$. The PDE have been simulated by performing a backward in time central order difference in space with a step $\delta_x$, with $\delta_t\leq \delta_x^2/(2\gamma)$ and $\delta_x=1/20$ to ensure the numerical stability of the approximation. 
\begin{figure}[t]
	\centering
	\subfigure[Simulations results obtained with $\gamma = 1$.]{\includegraphics[width=7cm]{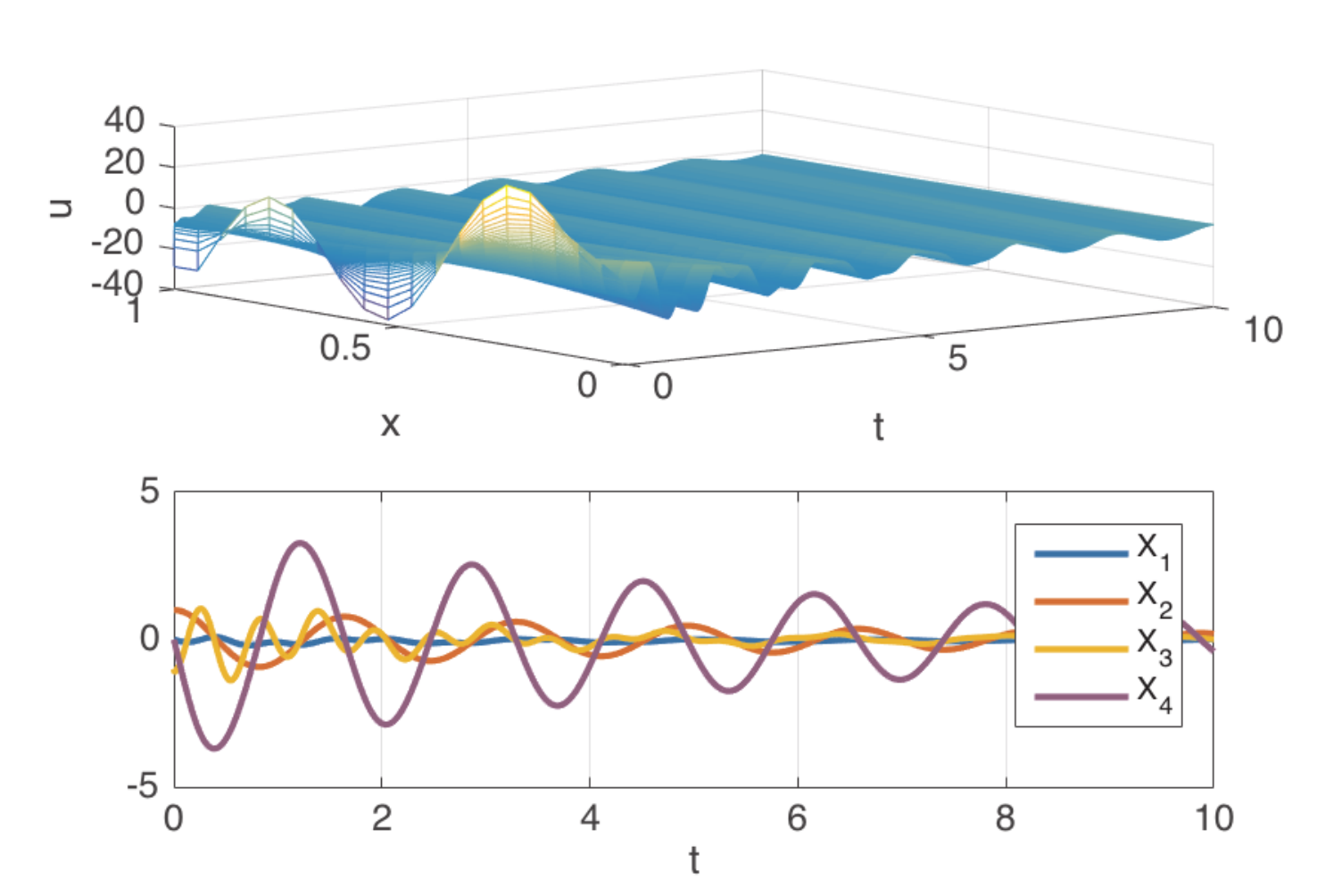}\label{fig:3sim1}}\vspace{0.3cm}
	\subfigure[Simulations results obtained with $\gamma = 0.2$.]{\includegraphics[width=7cm]{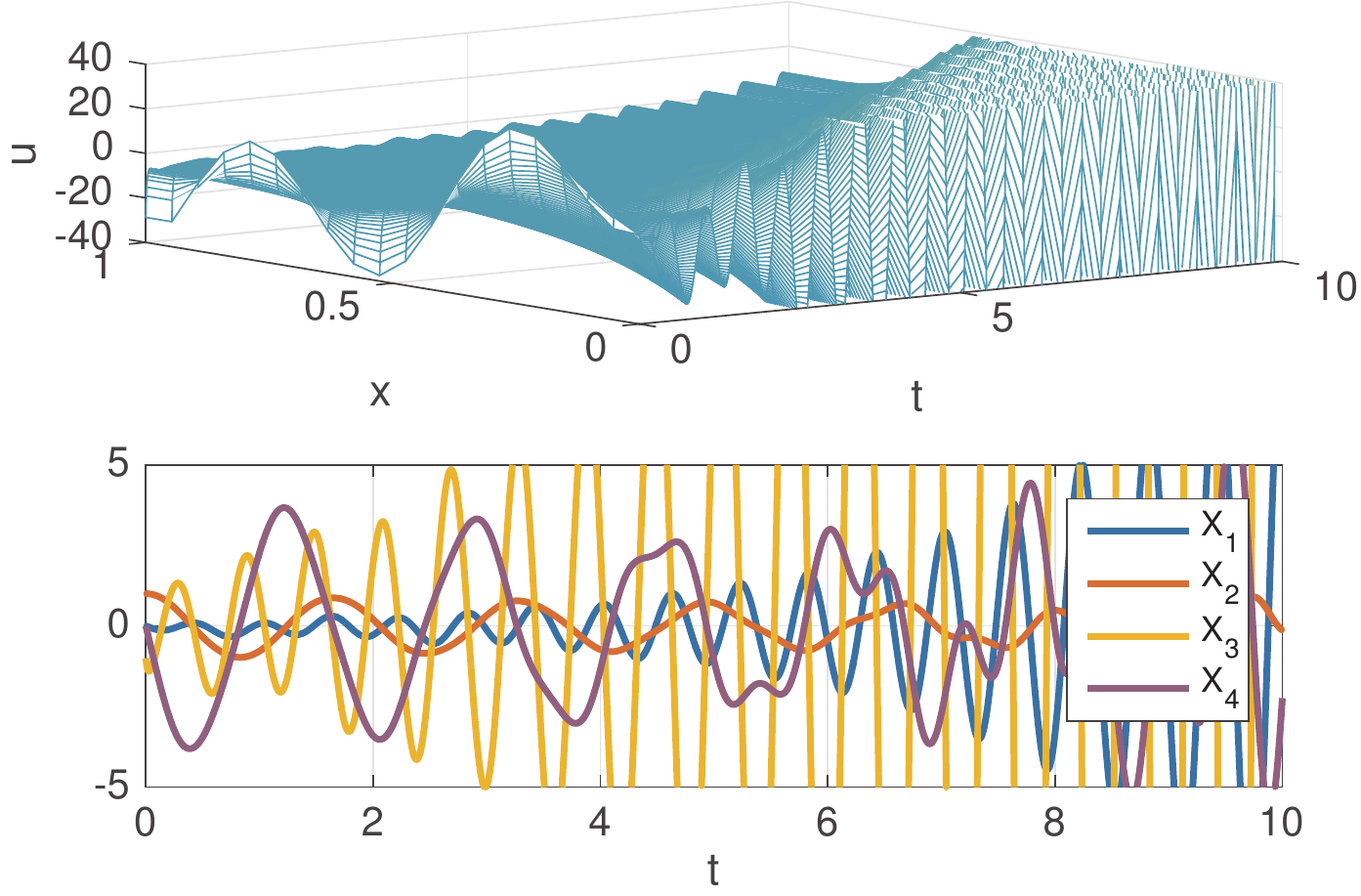}\label{fig:3sim2}}\vspace{0.3cm}
	\subfigure[Simulations results obtained with $\gamma = 0.05$.]{\includegraphics[width=7cm]{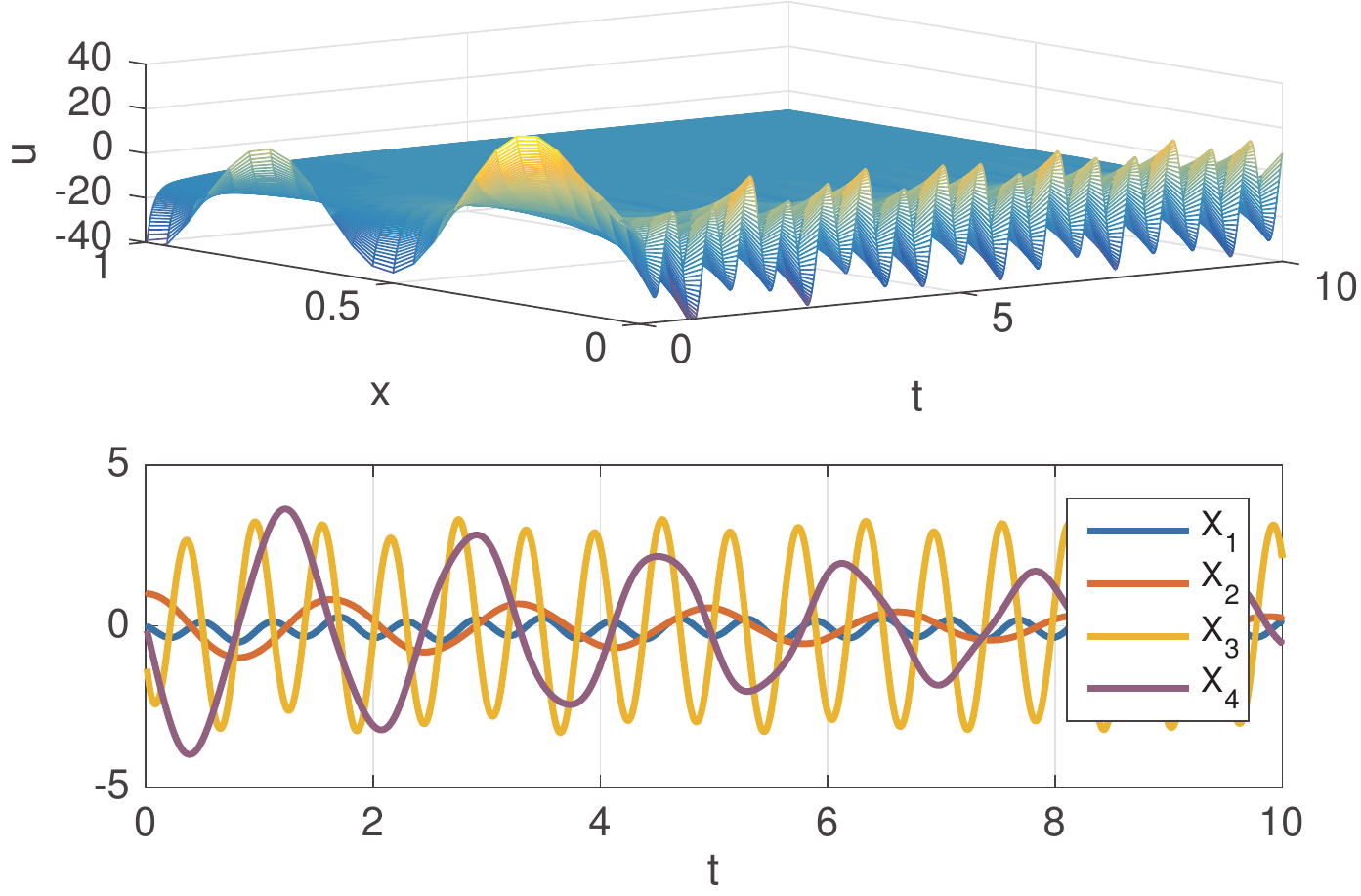}\label{fig:3sim3}}\vspace{0.3cm}
	\vspace{-0.25cm}
	\caption{Evolution of the state $(X,u)$ with respect to time with $K=100$ and for several values of~$\gamma$.}
	\label{fig:3sim}
\end{figure}

Figure \ref{fig:3sim}(a) obviously shows the stable behaviors detected by Theorem~\ref{ThStab} with $N=0$, with a quite fast convergence  to the equilibrium. The illustration of the second case Figure~\ref{fig:3sim}(b) is consistent with Figure~\ref{Plot_K_gamma}, since the solution of this system diverges. This is consistent with the fact that no solutions to the conditions of Theorem~\ref{ThStab} can be found for any $N\leq 12$. More interestingly, the last situation, presented in Figure~\ref{fig:3sim}(c), shows simulations results which are very slowly converging to the origin, with however a lightly damped oscillatory behavior of the state of the ODE and of the PDE close to the boundary $x=0$. On the other side, the state function $u(x,t)$ for sufficiently large values of $x$ is clearly smooth and converges slowly to the origin. Actually, case (c) illustrates a situation where a very small diffusion coefficient $\gamma$ induces a slow convergent behavior for which the conditions of Theorem 1 are only fulfilled for a large parameter $N\geq5$. This may indicate a correlation between the energy decay rate and the smallest $N$ for which the LMIs are verified.

\section{Conclusion and future works}

This article has provided a new and fruitful approach to numerically check the exponential stability of coupled ODE - Heat PDE systems. Our approach relies on the efficient construction of specific Lyapunov functionals allowing to derive diffusion parameter-dependent stability conditions. These tractable conditions of stability are expressed in terms of LMIs and obtained using the Bessel inequality. 
This work is a first contribution in the study of coupled ODE-Heat PDE systems using this framework and has the ambition to provide a method that could prove to be robust and useful in more intricate situations, such as other parabolic PDEs (e.g. in \cite{Day-QAM1983}, \cite{BKL-IEEETAC01}  or reaction-diffusion, Kuramoto-Sivashinski...), or vectorial infinite dimensional state $u$ to handle MIMO systems. A very interesting but challenging question is also the study of the convergence of our result when the order $N$ of truncation grows. We would like to prove that if the stability of the coupled system holds, then there exists an order $N$ for which our LMIs are verified.  Future research will also include  the study of the robustness of our technique with respect to the whole data quadruplet $(A,B,C,\gamma)$. Another possible direction would consist in the inclusion of different and more general formulation of the boundary conditions, which includes Neumann, Robin and Dirichlet type of boundary constraints and coupling conditions.

\bibliographystyle{plain}

\end{document}